\title{A Note on Twisted Bernoulli Measures}
\author{Altan Erdo\u{g}an\footnote{Department of Mathematics, Gebze Technical University}}
\newtheorem*{defn}{Definition}
\newtheorem*{rmk}{Remark}
\newtheorem{thm}{Theorem}
\newtheorem{cor}{Corollary}
\newtheorem{ex}{Example}
\newtheorem{lemma}{Lemma}
\newtheorem{prop}{Proposition}
\numberwithin{equation}{section}
\begin{document}
\date{}
\maketitle{}

\begin{abstract}
We introduce the twisted Bernoulli measures as a family of $p$-adic measures parametrized by the complement of the open disc with radius 1 and centered at 1 in the completion of an algebraic closure of p-adic numbers. These measures are the higher order versions of the measure used by Koblitz and Coleman to interpret ($p$-adic) polylogarithms. We also prove that these measures are the unique $p$-adic measures that can be obtained from polynomials over the field $\mathbb{Q}(y)$ which is similar to the uniqueness property of Bernoulli measures.
\end{abstract}
\newpage

\section{Introduction}
The Bernoulli polynomials $B_k(x)$, $ k \in \mathbb{Z}_{\geq 0}$ are defined by
\[
\dfrac{te^{xt}}{e^t - 1} = \sum_{k=0}^{\infty} \dfrac{B_k(x) t^k}{k!}.
\]
$B_k = B_k(0)$ is called the $k$-th Bernoulli number. Bernoulli polynomials have important applications in number theory and related fields. Among them a significant appereance of Bernoulli polynomials occur in $p$-adic analysis via Bernoulli distributions relating them to ($p$-adic) $L$-functions defined by Leopoldt and Kubota \cite{Leo_Kub}. Convenient references are \cite{Koblitz_book} and \cite{Mazur}. 

The Bernoulli distributions are defined as $\mu_{B,k} (a + (p^N)) = p^{N(k-1)} B_k(a/p^N)$ on the compact open subsets of the form $a + (p^N)$ for $0 \leq a \leq p^N - 1$ in $\mathbb{Z}_p$ and is extended to $\mathbb{Z}_p$ linearly. 

Let $\mu_{B,k}$ and $\mu_{k,\alpha}$ denote the Bernoulli distributions and the Bernoulli measures (i.e. the  regularized Bernoulli distributions) respectively for $k \in \mathbb{Z}_{\geq 1}$ and $\alpha\in \mathbb{Z}_p^*$. Then the $p$-adic zeta function is defined by one of the following expressions,
\[
\zeta_p(1-k) := \dfrac{1}{\alpha^{-k} - 1} \int\limits_{\mathbb{Z}_p^*} x^{k-1} d\mu_{1,\alpha} = \dfrac{1}{k(\alpha^{-k} - 1)}  \int\limits_{\mathbb{Z}_p^*} 1 d\mu_{k,\alpha} = (1-p^{k-1})(-B_k/k)
\]
for $k \in \mathbb{Z}_{\geq 1}$ and extended to $\mathbb{Z}_p$ by $p$-adic interpolation (Note that $\alpha \in \mathbb{Z}_p^{*}$ is only used to regularize the distributions, so these expressions are actually independent of $\alpha$).

In a similar way we can define the twisted Bernoulli polynomials $\beta_k(x,y)$ by
\[
\dfrac{te^{xt}}{y e^t - 1} = \sum_{k=0}^{\infty} \dfrac{\beta_k(x,y) t^k}{k!}.
\]
where $y \neq 1$. Note that $\beta_k(x,y)$ is a polynomial in $x$ over $\mathbb{Z}[y,1/(y-1)]$. The $k$-th twisted Bernoulli number is defined as $\beta_k(0,y)$. The basic properties of the twisted Bernoulli polynomials and relations of them with Lerch zeta function can be found in \cite{apostol}. These basic properties reveal the combinatorial similarities between the classical and twisted Bernoulli polynomials. In this paper we show that twisted Bernoulli polynomials can also be used to construct $p$-adic measures which are naturally related to ($p$-adic) polylogarithms, and this relation is similar to the one between Bernoulli measures and the ($p$-adic) zeta functions   

Throughout the paper $\mathbb{C}_p$ will denote the completion of a fixed algebraic closure of $\mathbb{Q}_p$. For $z \in \mathbb{C}$, $|z|$ denotes the Euclidean norm and for $\alpha \in \mathbb{C}_p$, $|\alpha|_p$ denotes the normalized norm on $\mathbb{C}_p$ where $|p|_p = 1/p$. 

The polylogarithms are defined as 
\[
Li_{s}(z) = \sum_{n=1}^{\infty} \dfrac{z^n}{n^s}
\]
for $|z-1| < 1$, $s \in \mathbb{Z}$ and extended by analytic continuation in a multi-valued way.  A more precise (and useful) definition can be made through a system of differential equations, but for the purposes of this paper we may skip this definition. The reader may refer to \cite{coleman}. Note that $Li_{s}(z)$ can also be seen as a $p$-adic function which is similarly convergent for $|z-1|_p <1$. But in this case it can not be extended to a larger domain in $\mathbb{C}_p$. Consequently the $p$-adic polylogarithms $Li^{(p)}_{s}(z)$ may be introduced as
\[
Li^{(p)}_{s}(z) = \sum_{n=1, p \nmid n}^{\infty} \dfrac{z^n}{n^s}= Li_{s}(z) - p^{-s} Li_{s}(z^p)  ,\,\,\,\, |z-1|_p <1.
\]
Now $Li^{(p)}_{s}(z)$ can be extended to a larger domain, $|z-1|_p > 1/p^{p-1}$  in $\mathbb{C}_p$. Again the reader is consulted to \cite{coleman} in advance for a complete exposition.

The $p$-adic polylogarithms can also be defined in terms of $p$-adic integrals as we outline below. The details of the below arguments can be found in \cite{koblitz_book1} and \cite{coleman}.

Let $\mu_z$ be the measure defined as $\mu_{z}(a+ (p^N)) = z^a / (1- z^{p^N})$ for $|z-1|_p \geq 1$. 
Then for $k \in \mathbb{Z}_{\geq 1}$ and $|z-1|_p \geq 1$, we have  
\[
Li_{1-k}(z) = \int\limits_{\mathbb{Z}_p} x^{k-1} d\mu_{z} 
\]
(Chapter II of \cite{koblitz_book1}). Note that for $k \in \mathbb{Z}_{\geq 0}$, $Li_{-k}(z) \in \mathbb{Q}(z)$. So we can see $Li_{-k}(z)$ as a function on any extension of $\mathbb{Q}$, and in particular on $\mathbb{C}$ or $\mathbb{C}_p$. A similar equality also holds for $p$-adic polylogartihms due to Coleman (Lemma 7.2 of \cite{coleman}); for any $k \in \mathbb{Z}$ and $|z-1|_p \geq 1$ we have 
\[
Li^{(p)}_{1-k}(z) = \int\limits_{\mathbb{Z}_p^*} x^{k-1} d\mu_{z}.
\]

Another remarkable equality for $Li_{1-k}(z)$ is obtained by Apostol in \cite{apostol}; for $|z| = 1$, $z \neq 1$, we have
\[
Li_{1-k}(z) = -\beta_k(0,z) / k.
\]
Indeed Apostol proves a more general result for some special values of Lerch zeta functions, but we consider only a special case here.

Combining these results we obtain
\[
Li_{1-k}(z) = \int\limits_{\mathbb{Z}_p} x^{k-1} d\mu_{z} = -\beta_k(0,z) / k
\]
for $k \in \mathbb{Z}_{\geq 1}$ and for an algebraic number $z$ with  $|z| = 1$ and $|z-1|_p = 1$.  

The motivation for this paper is the equalities, 
\begin{align}
\label{padicversion}
Li^{(p)}_{1-k}(z) = \int\limits_{\mathbb{Z}^{*}_p} x^{k-1} d\mu_{z} = - \dfrac{\beta_k(0,z) - p^{k-1} \beta_k(0,z^p)}{k}
\end{align}
which is obtained from the previous one by restricting the domain to $\mathbb{Z}_p^{*}$. Note that the first equality is valid for any $k \in \mathbb{Z}$, but the second equality is only valid for $k \in \mathbb{Z}_{\geq 1}$. So we have an analogue of the equalities 
\[
\zeta_p(1-k)=\dfrac{1}{\alpha^{-k} - 1} \int\limits_{\mathbb{Z}_p^*} x^{k-1} d\mu_{1,\alpha} = \dfrac{1}{k(\alpha^{-k} - 1)}  \int\limits_{\mathbb{Z}_p^*} 1 d\mu_{k,\alpha} = (1-p^{k-1})(-B_k/k)
\]
except the term involving an integral with respect to the measure $\mu_{k,\alpha}$. 

In this paper we aim to locate this missing integral in (\ref{padicversion}). Explicitly we will construct a family of $p$-adic measures $\mu_{\beta,k,z}$ for $ k \in \mathbb{Z}_{\geq 1}$ using twisted Bernoulli polynomials where the measure $\mu_z$ defined by Koblitz appears as $\mu_{z} = - \mu_{\beta,1,z}$ in this family and 
\[
\int\limits_{{\mathbb{Z}}^{*}_p} x^{k-1} d\mu_{\beta,1,z} = (1/k) \int\limits_{\mathbb{Z}^{*}_p} 1 d\mu_{\beta,k,z}.
\]
As a result we will also see that (\ref{padicversion}) holds for any $z \in \mathbb{C}_p$ with $|z-1|_p \geq 1$. These measures are complete analogue of Bernoulli measures in the sense that $\mu_{\beta,k,z}$ plays the role of $d(x^k)$ for a fixed $z$. 

We proceed as follows. First we prove the uniqueness of such measures (up to a multiple) under certain conditions and give general results for some integrals. Then we show the existence of these measures and also relate them to Bernoulli measures. 

\newpage

\section{A family of $p$-adic measures parametrized by a subset of $\mathbb{C}_p$}
\label{general}

In this section we seek $p$-adic measures $\mu_{k,z}$, $k \in \mathbb{Z}_{\geq 1}$ defined as 
\[
\mu_{k,z}(a+(p^N)) =p^{N(k-1)} z^a f_k(a/p^N, z^{p^N}) 
\] 
for some functions $f_k(x,y) \in \mathbb{Q}(y)[x]$. We will prove the uniqueness of such measures up to a multiple depending only on $k$. We will also see that the analogy between $\mu_{k,z}$ and $d(x^k)$ is a natural outcome of the uniqueness assertion. This has been implemented in Theorem \ref{indexshift} where some integrals with respect to $\mu_{k,z}$ for various $k$ are computed. Consequently the reader may drop the index $k$ in advance to simplify the notation until Theorem \ref{indexshift}.

We start with a basic result which can easily be adopted for any distribution. Indeed it is a general property of distributions. 

\begin{lemma}
\label{fdecomposition}
Let $z \in C_p$. Suppose that for $k \in \mathbb{Z}_{\geq 1}$ there exists a rational function $f_k(x,y) \in \mathbb{Q}(x,y)$ such that the map defined on compact open subsets of $\mathbb{Z}_p$ of the form $a+(p^N)$, $0 \leq a \leq p^N -1$ as
\[
\mu_{k,z}(a + (p^N)) = p^{N(k-1)} z^a f_k(a/p^N, z^{p^N})
\]
extends to a $p$-adic distribution on $\mathbb{Z}_p$. Then for any $n \geq 1$, 
\[
f_k(x,y) = p^{n(k-1)} \sum_{b=0}^{p^n-1} y^{b} f_k((x+b)/p^n, y^{p^n}).
\]
\end{lemma}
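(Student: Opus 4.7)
The plan is to derive the claimed identity directly from the consistency condition that any $p$-adic distribution must satisfy: for every $n \geq 1$, every $N \geq 1$, and every $0 \leq a < p^N$,
\[
\mu_{k,z}(a + (p^N)) \;=\; \sum_{b=0}^{p^n - 1} \mu_{k,z}\!\left(a + b p^N + (p^{N+n})\right).
\]
First I would substitute the hypothesized formula for $\mu_{k,z}$ on both sides of this refinement relation and cancel the common factor $p^{N(k-1)} z^a$. After routine rearrangement this produces the numerical identity
\[
f_k(a/p^N, z^{p^N}) \;=\; p^{n(k-1)} \sum_{b=0}^{p^n - 1} (z^{p^N})^{b}\, f_k\!\left((a/p^N + b)/p^n,\; (z^{p^N})^{p^n}\right),
\]
which is exactly the conclusion of the lemma evaluated at $(x,y) = (a/p^N, z^{p^N})$.

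The main obstacle will be upgrading this family of pointwise equalities to a genuine identity of rational functions in $x$ and $y$. The plan is to view both sides as polynomials in $x$ with coefficients in $\mathbb{Q}(y)$, of degree at most $d := \deg_x f_k$, a bound independent of $N$. For any $N$ with $p^N > d$, the previous step supplies $p^N$ distinct $x$-values at which the two polynomials agree after one specializes $y \mapsto z^{p^N}$, forcing them to coincide coefficient by coefficient at that value of $y$. Consequently each coefficient of the difference polynomial, which is a rational function of $y$, vanishes at $z^{p^N}$ for all sufficiently large $N$. Provided the orbit $\{z^{p^N} : N \geq 1\}$ is infinite in $\mathbb{C}_p$, which is the generic situation failing only when $z$ is a root of unity, each such coefficient has infinitely many zeros and must therefore vanish identically, delivering the desired equality in $\mathbb{Q}(x,y)$.
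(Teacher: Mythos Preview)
Your derivation of the pointwise identity at $(x,y)=(a/p^N,\,z^{p^N})$ is exactly what the paper does: it writes out only the $n=1$ step from the distribution relation $\mu(a+(p^N))=\sum_{b=0}^{p-1}\mu(a+bp^N+(p^{N+1}))$ and then appeals to induction on $n$, whereas you invoke the $n$-step refinement directly; these are trivially equivalent. The paper's proof stops there and does not discuss the passage from these pointwise equalities to an identity in $\mathbb{Q}(x,y)$.

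The upgrade you attempt in your second paragraph is a genuine subtlety the paper glosses over, but your argument has two gaps. First, the hypothesis places $f_k$ in $\mathbb{Q}(x,y)$, not in $\mathbb{Q}(y)[x]$, so you cannot treat both sides as polynomials in $x$ of some fixed degree $d=\deg_x f_k$ without first clearing the $x$-denominators and controlling where they vanish. Second, and more seriously, the lemma is stated for a \emph{single} $z\in\mathbb{C}_p$, and, as you yourself note, the argument collapses whenever $z$ is a root of unity, since then the orbit $\{z^{p^N}\}$ is finite; so your proof as written does not establish the conclusion in that case. A clean repair is to strengthen the hypothesis so that the distribution property holds for all $z$ (or for a Zariski-dense set of $z$), which is in fact how the lemma is applied downstream; alternatively, one may read the conclusion as asserted only at the points $(a/p^N, z^{p^N})$, which is all the paper's own argument actually delivers.
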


\begin{proof}
The case $n=1$ follows by definition of $\mu_{k,z}$ and the equality 
\[
\mu(a+(p^N)) = \sum_{b=0}^{p-1} \mu(a+bp^N+ (p^{N+1})).
\]
Now the claim 
\[
f(x,y) = p^{n(k-1)} \sum_{b=0}^{p^n-1} y^{b} f((x+b)/p^n, y^{p^n}) \nonumber 
\]
follows by induction on $n$. 
\end{proof}

The converse trivially holds, i.e. if 
\[
f_k(x,y) = p^{n(k-1)} \sum_{b=0}^{p^n-1} y^{b} f_k((x+b)/p^n, y^{p^n})
\] 
for all $n \geq 1$ (or for only $n=1$) then $\mu_{k,z}$ extends to a $p$-adic distribution. We will use Lemma \ref{fdecomposition} in the proof of the following theorem which will play an important role in the uniqueness assertion. 

\begin{thm}
\label{prep}
Let $z \in C_p$. Suppose that for $k \in \mathbb{Z}_{\geq 1}$ there exists a rational function $f_k(x,y) \in \mathbb{Q}(x,y)$ such that the map defined on compact open subsets of $\mathbb{Z}_p$ of the form $a+(p^N)$, $0 \leq a \leq p^N -1$ as
\[
\mu_{k,z}(a + (p^N)) = p^{N(k-1)} z^a f_k(a/p^N, z^{p^N})
\]
extends to a $p$-adic distribution on $\mathbb{Z}_p$. Suppose that $f_k(x,y)$ and partial derivatives of $f_k(x,y)$ with respect to $x$ is defined at $(0,0)$. Then
\[
y f_k(x+1,y) - f_k(x,y) = h(k) x^{k-1}
\]
where $h(k) \in \mathbb{Q}$ only depends on $k$. 
\end{thm}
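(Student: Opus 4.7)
The plan is to set $g(x, y) := y\, f_k(x+1, y) - f_k(x, y) \in \mathbb{Q}(x, y)$, derive the self-similarity $g(x, y) = p^{k-1}\, g(x/p, y^p)$ from Lemma \ref{fdecomposition}, and then show by expanding $g$ as a Laurent series in $x$ that $g$ is forced to be a rational multiple of $x^{k-1}$.

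First I would apply Lemma \ref{fdecomposition} with $n = 1$ to $f_k(x+1, y)$. Re-indexing $c = b+1$ makes the sum for $y\, f_k(x+1, y)$ range over $c = 1, \ldots, p$; the terms with $c \in \{1, \ldots, p-1\}$ coincide termwise with the $b \in \{1, \ldots, p-1\}$ terms in the expansion of $f_k(x, y)$, so they cancel in $g$. The two unmatched boundary contributions --- $y^p f_k(x/p+1, y^p)$ at $c = p$ (using $(x+p)/p = x/p + 1$) and $f_k(x/p, y^p)$ at $b = 0$ --- combine to give
\[
g(x, y) = p^{k-1}\bigl[y^p f_k(x/p+1, y^p) - f_k(x/p, y^p)\bigr] = p^{k-1}\, g(x/p, y^p).
\]

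Next, I would view $g$ as an element of $\mathbb{Q}(y)(x)$ and take its formal Laurent expansion at $x = 0$: $g(x, y) = \sum_i c_i(y)\, x^i$ with $c_i \in \mathbb{Q}(y)$ and only finitely many negative indices. Substituting into the functional equation and comparing coefficients of $x^i$ gives
\[
c_i(y) = p^{k-1-i}\, c_i(y^p) \qquad \text{for every } i.
\]
The crux --- and the main obstacle --- is the following rigidity claim: any nonzero $c \in \mathbb{Q}(y)$ satisfying $c(y) = \lambda\, c(y^p)$ for some $\lambda \in \mathbb{Q}^*$ must be a nonzero constant with $\lambda = 1$. Writing $c = P/Q$ in lowest terms, the relation reads $P(y)\, Q(y^p) = \lambda\, P(y^p)\, Q(y)$. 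Because $y \mapsto y^p$ preserves coprimeness in $\mathbb{Q}[y]$ (apply the substitution to a Bezout identity for $P,Q$), this forces $Q(y) \mid Q(y^p)$ and $Q(y^p) \mid Q(y)$, so $\deg Q = 0$; the reduced identity $P(y) = \lambda\, P(y^p)$ then forces $\deg P = 0$ and $\lambda = 1$.

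Applied to each $c_i$ with $\lambda = p^{k-1-i}$, the rigidity claim forces $c_i = 0$ whenever $i \neq k-1$ and makes $c_{k-1}$ a constant in $\mathbb{Q}$. By uniqueness of the Laurent expansion of a rational function, $g(x, y) = c_{k-1}\, x^{k-1}$ as elements of $\mathbb{Q}(x, y)$, so $h(k) := c_{k-1}$ is the required rational constant depending only on $k$. The hypothesis that $f_k$ and its $x$-derivatives are defined at $(0,0)$ is compatible with (and specialised by) this conclusion, and will be useful later for identifying $h(k)$ explicitly from the Taylor data of $f_k$ at the origin.
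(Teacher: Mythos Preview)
Your argument is correct and reaches the same conclusion, but by a genuinely different route from the paper. Both proofs start by establishing the self-similarity $g(x,y)=p^{k-1}g(x/p,y^p)$ via the telescoping computation you describe (the paper carries the index $n$ along, but that is just iteration of your single step). From there the paths diverge. The paper differentiates $k-1$ times in $x$ to obtain $G(x,y)=G(x/p^n,y^{p^n})$, then evaluates at rational points with $|y|<1$ (Archimedean) and lets $n\to\infty$ to force $G\equiv G(0,0)$; having reduced $g$ to a polynomial in $x$, it compares coefficients and uses a \emph{second} limit, now $p$-adic with $|y|_p<1$, to kill the lower-degree terms. Both limit steps rely on the hypothesis that $f_k$ and its $x$-derivatives are defined at $(0,0)$.

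Your approach replaces these two analytic limits by a single algebraic rigidity lemma: a nonzero $c\in\mathbb{Q}(y)$ with $c(y)=\lambda\,c(y^p)$ must be constant with $\lambda=1$. The Bezout/coprimeness argument you give is sound (indeed $Q(y^p)\mid Q(y)$ alone already forces $\deg Q=0$), and applying it to the Laurent coefficients $c_i$ at $x=0$ disposes of all $i\neq k-1$ at once. This is cleaner and, notably, does not actually use the regularity hypothesis at $(0,0)$: your proof shows the conclusion holds for any $f_k\in\mathbb{Q}(x,y)$ satisfying the distribution relation. The paper's approach, by contrast, makes essential use of that hypothesis to justify the limits, so your version is strictly more general in its input while being no less elementary.
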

\begin{proof}
Let $g_k(x,y) = y f_k(x+1,y) - f_k(x,y)$. Once $k$ is fixed we may simply write $f(x,y)$ and $g(x,y)$. Now by Lemma \ref{fdecomposition} for any $n \geq 1$ we have
\[
y f(x+1,y) = p^{n(k-1)} \sum_{b=0}^{p^n-1} y^{b+1} f((x+1+b)/p^n, y^{p^n}) = p^{n(k-1)} \sum_{b=1}^{p^n} y^{b} f((x+b)/p^n, y^{p^n})
\]
which implies that
\begin{align*}
g(x,y) &= y f(x+1,y) - f(x,y) \\ 
&= y p^{n(k-1)} \sum_{b=0}^{p^n-1} y^{b} f((x+1+b)/p^n, y^{p^n}) - p^{n(k-1)} \sum_{b=0}^{p^n-1} y^{b} f((x+b)/p^n, y^{p^n}) \\
&= p^{n(k-1)} \sum_{b=1}^{p^n} y^{b} f((x+b)/p^n, y^{p^n}) - \sum_{b=0}^{p^n-1} y^{b} f((x+b)/p^n, y^{p^n}) \\
&= p^{n(k-1)} (y^{p^n} f(1 + x/p^n, y^{p^n}) -  f(x/p^n, y^{p^n})) \\
&= p^{n(k-1)} g(x/p^n, y^{p^n})
\end{align*}

Let $G(x,y) = \frac{(\partial g)^{k-1}}{\partial x^{k-1}}(x,y)$. Taking the derivative of the equality $g(x,y) =p^{n(k-1)} g(x/p^n, y^{p^n})$ with respect to $x$ , $k-1$ times, we obtain
\[
G(x,y) = G(x/p^n, y^{p^n})
\]
Now for any $(x,y) \in {\mathbb{Q}}^2$ in the domain of $G(x,y)$ with $|y| < 1$ we have that
\[
G(x,y) = G(x/p^n, y^{p^n}) = \lim_{n \rightarrow \infty} G(x/p^n, y^{p^n}) = G(0,0)
\]
which implies that $G(x,y)$ is constant and so $g(x,y)$ is indeed a polynomial in $x$ of the form
\[
g(x,y) = \sum_{i=1}^{k} g_i(y) x^{k-i}
\]
for some rational functions $g_i(y) \in \mathbb{Q}(y)$. Now we will show that $g_i(y)$ are independent of $y$. Since $g(x,y) = p^{n(k-1)} g(x/p^n, y^{p^n})$, we have that
\[
 \sum_{i=1}^{k} g_i(y) x^{k-i} = p^{n(k-1)} \sum_{i=1}^{k} g_i(y^{p^n}) x^{k-i} 1/p^{n(k-i)} = \sum_{i=1}^{k} g_i(y^{p^n}) x^{k-i} p^{n(i-1)}.
\]
Equating the coefficients we obtain 
\[
g_i(y) = g_i(y^{p^n}) p^{n(i-1)} 
\]
for $i=1,2,...,k$. Now let $y \in \mathbb{Q}$ with $|y|_p <1$, so that
\[
\lim_{n \rightarrow \infty} y^{p^n} = 0 
\] 
where the limit is taken in $\mathbb{Q}_p$. So we have that 
\[
g_i(y) = \lim_{n \rightarrow \infty} g_i(y^{p^n}) p^{n(i-1)} = 0 \mbox{  if i } \neq 1 \mbox{ and } g_1(y) = g_1(0).
\]
Setting $g_1(0) = h(k)$ we obtain $g(x,y) = \sum_{i=1}^{k} g_i(y) x^{k-i} = g_1(0) x^{k-1} = h(k) x^{k-1}$ as desired (Recall that $g(x,y)$ and so $g_i$ depend on $k$). 
\end{proof}

The independence of $y f_k(x+1,y) - f_k(x,y)$ from $y$ is a major restriction on $f_k(x,y)$. Indeed we will see that additionally if $f_k(x,y) \in \mathbb{Q}(y)[x]$ then $f_k(x,y)$ is uniquely determined by $h(k)$. We will need the following lemma for this. 

\begin{lemma}
\label{coefficientrecursion}
In addition to the hypotheses of Theorem \ref{prep}, also assume that $f_k(x,y) \in \mathbb{Q}(y)[x]$. Then $f_k(x,y)$ is a polynomial in $x$ of the form 
\[
f_k(x,y) = \sum_{i=1}^{k} f_{k,i}(y) x^{k-i}.
\]
where the coefficients are given by the recurrence relation
\[
f_{k,i}(y) = \dfrac{-y}{y-1} \sum_{j=1}^{i-1} f_{k,j}(y) \binom{k-j}{i-j}.
\]
\end{lemma}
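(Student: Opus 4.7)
The plan is to apply Theorem \ref{prep} and compare coefficients of powers of $x$ in the resulting identity $y f_k(x+1, y) - f_k(x, y) = h(k) x^{k-1}$. Since by hypothesis $f_k(x, y) \in \mathbb{Q}(y)[x]$, the identity is one of polynomials in $x$ with coefficients in $\mathbb{Q}(y)$, which I can dissect coefficient by coefficient after expanding $(x+1)^j$ by the binomial theorem.

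First I would bound the degree in $x$. Writing $f_k(x, y) = \sum_{j=0}^{d} c_j(y) x^j$ with leading coefficient $c_d \neq 0$, the coefficient of $x^d$ in $y f_k(x+1, y) - f_k(x, y)$ is $(y - 1) c_d(y)$, which is nonzero since $y - 1$ is a unit in the field $\mathbb{Q}(y)$. Matching with the right-hand side $h(k) x^{k-1}$ forces $d = k - 1$ (the case $f_k = 0$ being trivially of the claimed form). Hence I may rewrite $f_k(x, y) = \sum_{i=1}^{k} f_{k,i}(y) x^{k-i}$ with $f_{k,i}(y) \in \mathbb{Q}(y)$.

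Next I would read off the recurrence. Using $(x+1)^{k-j} = \sum_{l \geq 0} \binom{k-j}{l} x^{k-j-l}$ and the substitution $l = i - j$, the coefficient of $x^{k-i}$ on the left-hand side is
\[
y \sum_{j=1}^{i} f_{k,j}(y) \binom{k-j}{i-j} - f_{k,i}(y) = (y-1) f_{k,i}(y) + y \sum_{j=1}^{i-1} f_{k,j}(y) \binom{k-j}{i-j},
\]
after isolating the $j = i$ contribution. On the right, the coefficient of $x^{k-i}$ equals $h(k)$ when $i = 1$ and vanishes for $i \geq 2$. The case $i = 1$ gives $f_{k,1}(y) = h(k)/(y-1)$, and for $i \geq 2$ solving for $f_{k,i}(y)$ yields the claimed recurrence $f_{k,i}(y) = \frac{-y}{y-1} \sum_{j=1}^{i-1} f_{k,j}(y) \binom{k-j}{i-j}$.

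The main obstacle is purely the combinatorial bookkeeping of indices in the binomial expansion; the substantive content has been captured by Theorem \ref{prep}, so no additional ideas are needed here.
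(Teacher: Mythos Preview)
Your proof is correct and follows essentially the same route as the paper's: invoke Theorem~\ref{prep} to get the identity $y f_k(x+1,y) - f_k(x,y) = h(k)x^{k-1}$, deduce $\deg_x f_k = k-1$, and then compare coefficients of $x^{k-i}$ after a binomial expansion to obtain $f_{k,1}(y)=h(k)/(y-1)$ and the recurrence for $i\ge 2$. Your degree argument is in fact slightly more explicit than the paper's (you observe that the leading coefficient $(y-1)c_d(y)$ is a nonzero element of $\mathbb{Q}(y)$ and you note the trivial case $f_k=0$), but the substance is identical.
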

\begin{proof}
Since the degree of $y f_k(x+1,y) - f_k(x,y)$ in $x$ is $k-1$, $f_k(x,y)$ must also have degree $k-1$. So we may write  
\[
f_k(x,y) = \sum_{i=1}^{k} f_{k,i}(y) x^{k-i}.
\]
Then we have that
\begin{align*}
&y f_k(x+1,y) - f_k(x,y) = h(k) x^{k-1} \implies \\
&y \sum_{i=1}^{k} f_{k,i}(y) \sum_{j=0}^{k-i} x^{k-i-j} - \sum_{i=1}^{k} f_{k,i}(y) x^{k-i} = h(k) x^{k-1}
\end{align*}
from which we easily see that $f_{k,1}(y) = h(k)/(y-1)$. Also by equating the coefficient of $x^{k-i}$ to 0 for $k \geq i \geq 2$ we obtain the recurrence relation 
\[
f_{k,i}(y) = \dfrac{-y}{y-1} \sum_{j=1}^{i-1} f_{k,j}(y) \binom{k-j}{i-j}.
\]
\end{proof}

The recurrence relation
\[
f_{k,i}(y) = \dfrac{-y}{y-1} \sum_{j=1}^{i-1} f_{k,j}(y) \binom{k-j}{i-j}, \,\,\,\, f_{k,1}(y) = h(k)/(y-1)
\]
allows us to derive some basic properties of $f_{k,i}(y)$. First we can see that $f_{k,i}$ is uniquely determined by $h(k)$. It can be shown inductively that each $f_{k,i}(y) \in \mathbb{Q}(y)$ is of the form
\[
f_{k,i} = \dfrac{\phi_{k,i}(y)}{(y-1)^i}
\]
where $\phi_{k,i} \in \mathbb{Q}[y]$ is a polynomial of degree at most $i-1$. Moreover $\phi_{k,i}(0) = 0$ for $i \geq 2$, $\phi_{k,1}(0)= h(k)$. Also if $h(k) \in \mathbb{Z}$ then $\phi_{k,i} \in \mathbb{Z}[y]$, and different choices of $h(k)$ only give multiples of $f_k(x,y)$ and so of $\mu_{k,z}$. Thus we have the following corollary about the uniqueness of $p$-adic distributions defined by functions $f_k(x,y)$.


\begin{cor}
\label{h(k)uniqueness}
In addition to the hypotheses of Theorem \ref{prep}, also assume that $f_k(x,y) \in \mathbb{Q}(y)[x]$. Then $f_k(x,y)$ and so $\mu_{k,z}$ is uniquely determined by the choice of $h(k)$ which only depends on $k$. 
\end{cor}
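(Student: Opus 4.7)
The plan is to apply Lemma \ref{coefficientrecursion} and perform a straightforward induction on $i$. By that lemma, once we know $f_k \in \mathbb{Q}(y)[x]$, we can write
\[
f_k(x,y) = \sum_{i=1}^{k} f_{k,i}(y)\, x^{k-i},
\]
and the coefficients satisfy $f_{k,1}(y) = h(k)/(y-1)$ together with
\[
f_{k,i}(y) = \frac{-y}{y-1}\sum_{j=1}^{i-1} f_{k,j}(y)\,\binom{k-j}{i-j}\quad (i\geq 2).
\]
The base case is explicit: $f_{k,1}$ is manifestly determined by the single rational number $h(k)$. For the inductive step, the right-hand side involves only $f_{k,j}$ with $j<i$, all determined by $h(k)$ by the inductive hypothesis, so $f_{k,i}$ is determined as well. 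After at most $k-1$ iterations every coefficient of the polynomial $f_k(x,y)$ is pinned down by $h(k)$. Since the value $\mu_{k,z}(a+(p^N)) = p^{N(k-1)}z^a f_k(a/p^N, z^{p^N})$ is then completely specified, the distribution $\mu_{k,z}$ is uniquely determined.

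To justify the phrase \emph{up to a multiple}, I would observe that both the initial condition $f_{k,1}(y) = h(k)/(y-1)$ and the recurrence are $\mathbb{Q}$-linear in $h(k)$. Hence if we replace $h(k)$ by $\lambda h(k)$ for some $\lambda \in \mathbb{Q}^\times$, every coefficient $f_{k,i}(y)$ scales by $\lambda$, so $f_k(x,y)$ and $\mu_{k,z}$ scale by $\lambda$. In particular, any two admissible choices of $f_k$ corresponding to nonzero $h(k)$'s differ by the scalar given by the ratio.

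There is no real obstacle here: the substantive work has already been carried out in Theorem \ref{prep} (proving the functional identity is $y$-independent) and in Lemma \ref{coefficientrecursion} (translating that identity into a triangular recurrence using the extra hypothesis $f_k \in \mathbb{Q}(y)[x]$). The corollary is simply the observation that a triangular recurrence with one free scalar parameter produces a one-parameter family of solutions.
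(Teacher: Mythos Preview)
Your argument is correct and matches the paper's own reasoning: the paper also derives uniqueness directly from the triangular recurrence of Lemma~\ref{coefficientrecursion} by induction on $i$, and notes the $\mathbb{Q}$-linearity in $h(k)$ to conclude that different choices yield scalar multiples of $f_k(x,y)$ and hence of $\mu_{k,z}$.
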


Up to now we proved the uniqueness of the $p$-adic distributions of the form 
\[
\mu_{k,z}(a+(p^N)) =p^{N(k-1)} z^a f_k(a/p^N, z^{p^N}) 
\]
for $f_k(x,y) \in \mathbb{Q}(y)[x]$ defined at $(0,0)$. In the next section we will show the existence of such distributions by giving an explicit example, and indeed we will see that the given $f_k(x,y)$ is a very familiar function. Here we will study further properties of $\mu_{k,z}$. First we will show that $\mu_{k,z}$ is bounded and so extends to a $p$-adic measure.

\begin{thm}
\label{generalmeasure}
Let $z \in C_p$ with $|z-1|_p \geq 1$. Let $f_k(x,y) \in \mathbb{Q}(y)[x]$ be as above. Then the $p$-adic distribution
\[
\mu_{k,z} (a + (p^N)) = p^{N(k-1)} z^a f_k(a/p^N, z^{p^N})
\]
extends to a $p$-adic measure on $\mathbb{Z}_p$. Also if $h(k) \in \mathbb{Z}$ then $|\mu_{k,z}(a+ (p^N))|_p \leq 1$ for any $N$ and $a$. 
\end{thm}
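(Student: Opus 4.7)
The strategy is to exploit the explicit polynomial description of $f_k$ coming from Lemma \ref{coefficientrecursion} and to bound $\mu_{k,z}(a+(p^N))$ termwise via the ultrametric inequality. Recall from Lemma \ref{coefficientrecursion} and the discussion following it that
\[
f_k(x,y) \;=\; \sum_{i=1}^{k} \frac{\phi_{k,i}(y)}{(y-1)^i}\, x^{k-i},
\]
with $\phi_{k,i} \in \mathbb{Q}[y]$ of degree at most $i-1$, and $\phi_{k,i} \in \mathbb{Z}[y]$ when $h(k) \in \mathbb{Z}$. Substituting $y = z^{p^N}$, $x = a/p^N$ and multiplying by $p^{N(k-1)} z^a$ yields
\[
\mu_{k,z}(a+(p^N)) \;=\; z^{a} \sum_{i=1}^{k} \frac{\phi_{k,i}(z^{p^N})}{(z^{p^N}-1)^i}\, a^{k-i}\, p^{N(i-1)}.
\]

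Assuming $h(k) \in \mathbb{Z}$, I will show each of these $k$ summands has $p$-adic absolute value at most $1$, so that the whole sum does as well. The analysis splits according to whether $|z-1|_p > 1$ or $|z-1|_p = 1$. If $|z-1|_p > 1$, then ultrametricity forces $|z|_p = |z-1|_p > 1$, whence $|z^{p^N}|_p > 1$ and $|z^{p^N}-1|_p = |z|_p^{p^N}$. Using $|\phi_{k,i}(z^{p^N})|_p \leq |z|_p^{(i-1)p^N}$ (degree bound and integrality of coefficients), $|z^a|_p \leq |z|_p^{p^N - 1}$, and $|a^{k-i}|_p \leq 1$, the $i$-th term is bounded by $|z|_p^{-1}\, p^{-N(i-1)} \leq 1$. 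If instead $|z-1|_p = 1$, then $|z|_p \leq 1$, so $z$ lies in the valuation ring of $\mathbb{C}_p$ and its reduction $\bar z$ in the residue field $\overline{\mathbb{F}_p}$ satisfies $\bar z \neq 1$. Applying Frobenius in characteristic $p$ gives $\bar z^{p^N} - 1 = (\bar z - 1)^{p^N} \neq 0$, so $|z^{p^N}-1|_p = 1$. Every remaining factor in the summand is then bounded by $1$, and the termwise estimate follows.

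For arbitrary $h(k) \in \mathbb{Q}$, write $h(k) = m/c$ with $m,c \in \mathbb{Z}$. By Corollary \ref{h(k)uniqueness}, scaling $h(k)$ by $c$ scales $\mu_{k,z}$ by the same factor, so $c\, \mu_{k,z}$ is the distribution corresponding to the integer constant $m$ and is therefore $\leq 1$ in absolute value by the previous step. Consequently $|\mu_{k,z}(a+(p^N))|_p \leq |c|_p^{-1}$ uniformly in $a$ and $N$, which is precisely the boundedness required for $\mu_{k,z}$ to extend to a $p$-adic measure. The main subtlety is handling the case $|z-1|_p = 1$: one needs the Frobenius reduction argument to guarantee that $|z^{p^N}-1|_p$ does not shrink as $N$ grows, which would otherwise swamp the $p^{N(i-1)}$ factor. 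Once that observation is secured, the rest is routine valuation bookkeeping in the ultrametric.
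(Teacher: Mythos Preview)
Your proof is correct and follows essentially the same route as the paper: expand $\mu_{k,z}(a+(p^N))$ using the explicit form $f_{k,i}(y)=\phi_{k,i}(y)/(y-1)^i$ from Lemma~\ref{coefficientrecursion}, then bound each term ultrametrically, splitting into the cases $|z|_p>1$ and $|z|_p\le 1$ (equivalently $|z-1|_p>1$ and $|z-1|_p=1$), and finally reduce general $h(k)\in\mathbb{Q}$ to the integral case via Corollary~\ref{h(k)uniqueness}. The one place where you are more careful than the paper is the case $|z-1|_p=1$: the paper simply asserts $|z^a f_{k,i}(z^{p^N})|_p\le 1$ there, implicitly using $|z^{p^N}-1|_p\ge 1$, whereas you supply the Frobenius reduction argument $\bar z^{p^N}-1=(\bar z-1)^{p^N}\neq 0$ to justify it.
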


\begin{proof}
We need to show that $\mu_{k,z}$ is bounded on compact open subsets of $\mathbb{Z}_p$ and it is enough to check boundedness for subsets of the form $a+(p^N)$. Again we drop the index $z$ whenever it is understood to simplify the notation. By Corollary \ref{h(k)uniqueness} it is enough to prove the claim only for one choice of $h(k)$, so we may assume that $h(k) \in \mathbb{Z}$. 

By Lemma \ref{coefficientrecursion} and the discussion following it, we have that
\[
f_k(x,y) = \sum_{i=1}^{k} f_{k,i}(y) x^{k-i}
\]
where $f_{k,i}(y) = \phi_{k,i}(y)/(y-1)^i$ for some $\phi_{k,i}(y) \in \mathbb{Z}[y]$ with deg$(\phi_{k,i}(y)) = i-1$. Then
\[
\mu(a + (p^N)) = p^{N(k-1)} z^a \sum_{i=1}^{k} f_{k,i}(z^{p^N}) \dfrac{a^{k-i}}{(p^N)^{k-i}} = z^a \sum_{i=1}^{k} f_{k,i}(z^{p^N}) a^{k-i} p^{N(i-1)}.
\]
We have that $|a^{k-i} p^{N(i-1)}|_p \leq 1$ for $i=1,2,...,k$. So we need to show that $|z^a f_{k,i}(z^{p^N})|_p \leq 1$. 

If $|z|_p \leq 1$ then $|\phi_{k,i}(z^{p^N})|_p, |z^a|_p \leq 1$ (Note that $\phi_{k,i}(z^{p^N}) \in \mathbb{Z}[z^{p^N}]$). It follows that $|z^a f_{k,i}(z^{p^N})|_p \leq 1$. 

Now assume that $|z|_p > 1$. Since by assumption $|z^{p^N}-1|_p \geq 1$ we must have $|z^{p^N}|_p = |z^{p^N}-1|_p > 1$. Also $|z^a|_p \leq |z^{p^N}|_p$. Since deg$(\phi_{k,i}(y)) = i-1$ we have that $|z^a \phi_{k,i}(z^{p^N})|_p \leq {|z^{p^N}|^{i}}_p$ and so $|z^a f_{k,i}(z^{p^N})|_p \leq 1$ as desired. 
\end{proof}

\begin{rmk}
\normalfont{The $p$-adic measures $\mu_{k,z}$ defined above and the Bernoulli distributions have similar properties. It is easy to show that $B_k(x)$ is the only polynomial in one variable that can be used to define a $p$-adic distribution in this way. The main difference between $\mu_{k,z}$ and $\mu_{B,k}$ is that $\mu_{k,z}$ is bounded so extends to a measure contrary to $\mu_{B,k}$. The reason behind this difference is that $B_k(x)$ is a polynomial of degree $k$ but deg$_x(f_k(x,y))$ is $k-1$.}
\end{rmk}

We end this section by evaluating some integrals with respect to $\mu_{k,z}$. We will use these results in the next section. We fix $z$ with $|z-1|_p \geq 1$ and may denote $\mu_{k,z}$ by $\mu_k$ if there is no confusion.

\begin{prop}
\label{overZ_p}
With the above notation we have
\begin{align*}
i)& \int\limits_{\mathbb{Z}_p} 1 d\mu_{k} = f_k(0,z), \\ \nonumber
ii)& \int\limits_{\mathbb{Z}^{*}_p} 1 d\mu_{k,z} = f_k(0,z) - p^{k-1} f_k(0,z^p).
\end{align*}
\end{prop}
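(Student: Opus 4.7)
The plan is to observe that both integrals reduce to direct evaluations of the measure on specific compact open subsets of $\mathbb{Z}_p$, simply by unwinding the defining formula
\[
\mu_{k,z}(a + (p^N)) = p^{N(k-1)} z^a f_k(a/p^N, z^{p^N}).
\]

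For part (i), I would note that $\mathbb{Z}_p$ itself may be written as the compact open $0 + (p^0)$ (i.e.\ taking $N=0$ and $a=0$ in the definition). Since the constant function $1$ integrates to $\mu_{k,z}(\mathbb{Z}_p)$, substitution immediately gives
\[
\int\limits_{\mathbb{Z}_p} 1\, d\mu_{k,z} = \mu_{k,z}(0+(p^0)) = p^{0\cdot(k-1)} z^0 f_k(0,z) = f_k(0,z).
\]
(Boundedness from Theorem \ref{generalmeasure} is what lets us talk about the integral of $1$ in the first place.)

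For part (ii), I would write $\mathbb{Z}_p^{*} = \mathbb{Z}_p \setminus p\mathbb{Z}_p$ as a disjoint union-style decomposition, so that by additivity of the measure
\[
\int\limits_{\mathbb{Z}_p^{*}} 1\, d\mu_{k,z} = \mu_{k,z}(\mathbb{Z}_p) - \mu_{k,z}(p\mathbb{Z}_p).
\]
The first term is handled by part (i). For the second term, the key observation is that $p\mathbb{Z}_p$ is precisely the compact open subset $0 + (p^1)$ (i.e.\ $N=1$, $a=0$), so
\[
\mu_{k,z}(p\mathbb{Z}_p) = \mu_{k,z}(0 + (p^1)) = p^{1\cdot(k-1)} z^0 f_k(0/p, z^p) = p^{k-1} f_k(0, z^p).
\]
Subtracting yields the claimed formula.

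There is essentially no obstacle here; both identities are bookkeeping once the definition is applied to the correct sets. The only point deserving care is ensuring we are entitled to treat the distribution as a measure (so that integrals of $1$ are well-defined), which is precisely the content of Theorem \ref{generalmeasure} under the hypothesis $|z-1|_p \geq 1$ that we have already fixed.
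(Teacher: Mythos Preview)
Your proof is correct and essentially the same as the paper's, which simply cites Lemma~\ref{fdecomposition} and the definition of the integral. The only cosmetic difference is that you evaluate the defining formula directly at $N=0$ and $N=1$, whereas invoking Lemma~\ref{fdecomposition} amounts to showing that every Riemann sum $\sum_{a=0}^{p^N-1}\mu_{k,z}(a+(p^N))$ collapses to $f_k(0,z)$; these are two phrasings of the same additivity.
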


\begin{proof}
It follows by  Lemma \ref{fdecomposition} and the definition of these integrals.
\end{proof}



\begin{prop}
\label{indexshift}
Let $r$ and $k \geq 1$ be integers. Then
\begin{align*}
i)& \int\limits_{\mathbb{Z}_p} x^r d\mu_{k} = \dfrac{h(k)}{h(1)} \int\limits_{\mathbb{Z}_p} x^{r+k-1} d\mu_{1}, r \geq 0\\
ii) & \int\limits_{\mathbb{Z}_p^{*}} x^r d\mu_{k} = \dfrac{h(k)}{h(1)} \int\limits_{\mathbb{Z}_p^{*}} x^{r+k-1} d\mu_{1}.
\end{align*}
\end{prop}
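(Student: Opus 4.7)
The plan is to prove (i) by computing Riemann sums directly from the definition of $\mu_{k,z}$ and showing that only the leading-order term in the coefficient expansion of $f_k(x,y)$ contributes in the $p$-adic limit; then (ii) will follow from (i) together with the decomposition $\mathbb{Z}_p = \mathbb{Z}_p^* \sqcup p\mathbb{Z}_p$.

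By Lemma \ref{coefficientrecursion} I would first write
\[
f_k(x,y) = \sum_{i=1}^{k} f_{k,i}(y)\, x^{k-i},\qquad f_{k,1}(y) = \frac{h(k)}{y-1},
\]
and note that the same recursion applied to $k=1$ forces $f_1(x,y) = h(1)/(y-1)$, independent of $x$. Substituting into $\mu_{k,z}(a+(p^N)) = p^{N(k-1)} z^a f_k(a/p^N, z^{p^N})$ and setting $\sigma_{m,N} := \sum_{a=0}^{p^N-1} a^m z^a$, the level-$N$ Riemann sum for $\int_{\mathbb{Z}_p} x^r\, d\mu_k$ rearranges as
\[
\sum_{i=1}^{k} f_{k,i}(z^{p^N})\, p^{N(i-1)}\, \sigma_{r+k-i,\,N}.
\]
The $i=1$ contribution is precisely $\frac{h(k)}{h(1)}$ times the level-$N$ Riemann sum for $\int_{\mathbb{Z}_p} x^{r+k-1}\, d\mu_1$, since $f_1$ is the constant $h(1)/(y-1)$.

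The main technical step is to show that the terms with $i \geq 2$ tend to $0$ in $\mathbb{C}_p$ as $N \to \infty$. By the discussion after Lemma \ref{coefficientrecursion}, $f_{k,i}(y) = \phi_{k,i}(y)/(y-1)^i$ with $\phi_{k,i} \in \mathbb{Z}[y]$ of degree at most $i-1$. If $|z|_p \leq 1$ then $|f_{k,i}(z^{p^N})|_p \leq 1$ and $|\sigma_{m,N}|_p \leq 1$, so each such term is bounded by $p^{-N(i-1)}$. If $|z|_p > 1$, the identities $|z^{p^N}-1|_p = |z|_p^{p^N}$, $|\phi_{k,i}(z^{p^N})|_p \leq |z|_p^{p^N(i-1)}$, and $|\sigma_{m,N}|_p \leq |z|_p^{p^N-1}$ combine to give the same $p^{-N(i-1)}$ bound. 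Either way the $i \geq 2$ contributions vanish, giving (i).

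For (ii), the substitution $a = pb$ in the Riemann sum for $\int_{p\mathbb{Z}_p} x^r\, d\mu_{k,z}$ produces
\[
\int_{p\mathbb{Z}_p} x^r\, d\mu_{k,z} = p^{r+k-1} \int_{\mathbb{Z}_p} x^r\, d\mu_{k,z^p},
\]
matching the $r=0$ instance already implicit in Proposition \ref{overZ_p}(ii). Applying (i) to the two $\mu_k$-integrals on the right and recombining via the same decomposition for $\mu_1$ yields (ii). For negative $r$ the Riemann-sum argument of (i) can simply be rerun with $a$ ranging over $0 \leq a < p^N$ coprime to $p$; the norm estimates on the resulting sums $\sigma^*_{m,N}$ are identical, and again only the $i=1$ term survives. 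The principal obstacle is the convergence estimate for $i \geq 2$; what makes it go through is that the degree bound $\deg \phi_{k,i} \leq i-1$ is exactly matched by the denominator $(y-1)^i$, so $|f_{k,i}(z^{p^N})|_p$ decays fast enough to dominate the growth of $\sigma_{m,N}$ and leave the crucial factor $p^{-N(i-1)}$.
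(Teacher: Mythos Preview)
Your argument for (i) is essentially identical to the paper's: expand $f_k$ via Lemma~\ref{coefficientrecursion}, bound each $i\ge 2$ summand by $p^{-N(i-1)}$ using the estimates from the proof of Theorem~\ref{generalmeasure}, and identify the surviving $i=1$ term with the $\mu_1$-Riemann sum. The paper simply cites Theorem~\ref{generalmeasure} for the termwise bound $|z^a f_{k,i}(z^{p^N})a^{r+k-i}p^{N(i-1)}|_p \le p^{-N(i-1)}$, whereas you unpack the two cases $|z|_p\le 1$ and $|z|_p>1$ explicitly; the content is the same.

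For (ii) there is a small difference. The paper handles all integers $r$ in one stroke by rerunning the Riemann-sum computation over $0\le a<p^N$ with $p\nmid a$, using only that $|a^r|_p=1$ for such $a$ (which is exactly what is needed to accommodate negative $r$). Your decomposition $\mathbb{Z}_p=\mathbb{Z}_p^*\sqcup p\mathbb{Z}_p$ together with the identity $\int_{p\mathbb{Z}_p}x^r\,d\mu_{k,z}=p^{r+k-1}\int_{\mathbb{Z}_p}x^r\,d\mu_{k,z^p}$ is correct and gives (ii) for $r\ge 0$, but it is a detour: you still have to fall back on the direct Riemann-sum argument over units for $r<0$, and that argument already covers $r\ge 0$ as well. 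So the paper's route is shorter and uniform in $r$, while yours establishes an extra (true and mildly interesting) restriction identity along the way.
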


\begin{proof}
By definiton
\begin{eqnarray}
\int\limits_{\mathbb{Z}_p} x^r d\mu_{k} &=& \lim_{N \rightarrow \infty} \sum_{a=0}^{p^N - 1} a^r p^{N(k-1)} z^a f_k(a/p^N, z^{p^N}) \nonumber \\ &=& \lim_{N \rightarrow \infty} \sum_{a=0}^{p^N - 1} a^r z^a \sum_{i=1}^{k} f_{k,i}(z^{p^N}) a^{k-i} p^{N(i-1)} \nonumber
\end{eqnarray}

By the proof Theorem \ref{generalmeasure} we have $\left|f_{k,i}(z^{p^N}) a^{r+k-i} z^a p^{N(i-1)}\right|_p \leq 1/p^{N(i-1)}$. Thus the terms corresponding to $i\geq 2$ in the above sum tend to zero as $N \rightarrow \infty$. So it is enough to take the sum only for $i=1$. Then since $f_{k,1} = h(k) / (y-1)$ and $f_1(x,y) = f_{1,1} (y)=h(1)/(y-1)$ we obtain 
\begin{eqnarray}
\int\limits_{\mathbb{Z}_p} x^r d\mu_{k} = \lim_{N \rightarrow \infty} f_{k,1}(z^{p^N}) \sum_{a=0}^{p^N - 1} a^{r+k-1} z^a = \dfrac{h(k)}{h(1)}\int\limits_{\mathbb{Z}_p} x^{r+k-1} d\mu_{1} \nonumber
\end{eqnarray}
which proves the first part. 

For the second part,  it is enough to change the summation by $\sum_{a=0,\, p\nmid a}^{p^N - 1}$ and use the fact that $|a^r|_p =1$ for any $a$ not divisible by $p$. 
\end{proof}


Choosing $r=0$ in Proposition \ref{indexshift} we see that
\[
\int\limits_{X} 1 d\mu_{k} = \dfrac{h(k)}{h(1)} \int\limits_{X} x^{k-1} d\mu_{1}
\]
where $X = \mathbb{Z}_p$ or $\mathbb{Z}_p^{*}$. It is reasonable to choose $h(k)$ so that we have the equality
\[
\int\limits_{X} 1 d\mu_{k} = k \int\limits_{X} x^{k-1} d\mu_{1}
\]
as we have for the Bernoulli measures. In the next section we will show the existence of $p$-adic distributions of this form satisfying this identity explicitly. So from now on we may take $h(k) = k$. 


\section{Twisted Bernoulli measures}
\label{tbnumbers}
First we recall the twisted Bernoulli polynomials which were defined in the Introduction. We define the twisted Bernoulli polynomials $\beta_{k}(x, y)$ for $k=0,1,2....$ by 

\[
\dfrac{t e^{xt}}{y e^t - 1} = \sum\limits_{k=0}^{\infty} \dfrac{\beta_{k}(x, y)}{k!} t^k
\]
where $y \neq 1$. For $y=1$, we may set $\beta_k(x,1) = B_k(x)$, where $B_k(x)$ is the $k$-th Bernoulli polynomial. But unless otherwise specified we always omit the case $y=1$. We see that $\beta_0=0$. For $x=0$, $\beta_k(0,y)$ is called as the $k$-th twisted Bernoulli number and may be denoted by $\beta_k(0,y)=\beta_k(y)$.

First we give some basic properties of $\beta_k(x,y)$ that we will need later. These follow by direct computation. The reader may also refer to \cite{apostol} for details. We may use these identities without any explanation and further reference. 

First we have that $\beta_0=0$. Let $\beta_k(y) = \beta_k(0,y)$. Then we see that 
\[
\beta_k(x,y) = \sum_{i=0}^{k} \binom{k}{i} \beta_i(y) x^{k-i}.
\]
We also have that for $k\geq 2$
\[
\beta_k(y) = y \sum_{i=0}^{k} \binom{k}{i} \beta_i(y) \mbox{  or equivalently  } \beta_k(y) = \dfrac{y}{1-y} \sum_{i=0}^{k-1} \binom{k}{i} \beta_i(y)
\]

Final equation that we may need is 
\[
\beta_k(a+b, y) = \sum_{i=0}^{k} \binom{k}{i} \beta_i(a,y) b^{k-i}.
\]
As $\beta_0=0$ we may remove the index $i=0$ in these identities whenever we need. 

Now we will show that $\beta_k(x,y)$ satisfy the hypothesis of in Theorem \ref{prep} and so provide the first nontrivial example of $p$-adic measures discussed here. They additionally play the role of $d(x^k)$ as discussed in the previous section. 

\begin{thm}
Let $\beta_k(x,y)$ be the $k$-th twisted Bernoulli polynomial. Then the map defined as 
\[
\mu_{k,z}(a + (p^N)) = p^{N(k-1)} z^a \beta_k(a/p^N, z^{p^N})
\]
extends to a $p$-adic measure. Moreover $y \beta_k(x+1,y) - \beta(x,y) = k x^{k-1}$, i.e. with the notation of the previous section $h(k) = k$.
\end{thm}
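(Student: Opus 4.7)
The plan is to reduce the assertion to two generating-function identities for $\beta_k(x,y)$: the functional equation $y\beta_k(x+1,y) - \beta_k(x,y) = k x^{k-1}$, which identifies $h(k)=k$, and the distribution relation
\[
\beta_k(x,y) \;=\; p^{k-1}\sum_{b=0}^{p-1} y^b\, \beta_k\!\left(\tfrac{x+b}{p},\, y^p\right),
\]
which is exactly the $n=1$ case of Lemma \ref{fdecomposition}. Once both identities are in hand, the extension to a distribution is the converse of Lemma \ref{fdecomposition}, and the extension to a measure is supplied by Theorem \ref{generalmeasure}, applied to $f_k=\beta_k$. The hypothesis $f_k(x,y)\in\mathbb{Q}(y)[x]$ is visible from the standard expansion $\beta_k(x,y) = \sum_{i=0}^{k} \binom{k}{i} \beta_i(y)\, x^{k-i}$, and since $h(k)=k\in\mathbb{Z}$ the bound $|\mu_{k,z}(a+(p^N))|_p\le 1$ comes for free.

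For the functional equation I would work formally with the defining series. Multiplying and subtracting gives
\[
\sum_{k\ge 0}\bigl(y\beta_k(x+1,y)-\beta_k(x,y)\bigr)\dfrac{t^k}{k!} \;=\; \dfrac{t e^{xt}(ye^t-1)}{ye^t-1} \;=\; t e^{xt} \;=\; \sum_{k\ge 1} k\,x^{k-1}\dfrac{t^k}{k!},
\]
and comparing coefficients of $t^k/k!$ yields both the identity and $h(k)=k$.

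For the distribution relation the same method works. Substituting $(x+b)/p$ for $x$ and $y^p$ for $y$ and rescaling $t\mapsto pt$ gives
\begin{align*}
\sum_{k\ge 0}\Bigl(p^{k-1}\sum_{b=0}^{p-1} y^b \beta_k\!\left(\tfrac{x+b}{p},y^p\right)\!\Bigr)\dfrac{t^k}{k!}
&= \dfrac{1}{p}\sum_{b=0}^{p-1} y^b\cdot \dfrac{pt\, e^{(x+b)t}}{y^p e^{pt}-1} \\
&= \dfrac{t\,e^{xt}}{y^p e^{pt}-1}\sum_{b=0}^{p-1}(ye^t)^b \\
&= \dfrac{t\,e^{xt}}{ye^t-1},
\end{align*}
where the last step uses the finite geometric sum $\sum_{b=0}^{p-1}u^b=(u^p-1)/(u-1)$ with $u=ye^t$. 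Matching against the generating series of $\beta_k(x,y)$ yields the relation for every $k$.

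Assembling these: the $n=1$ case of the distribution relation, combined with the converse of Lemma \ref{fdecomposition}, shows that $\mu_{k,z}$ extends to a $p$-adic distribution; Theorem \ref{generalmeasure} (which we can invoke under the standing assumption $|z-1|_p\ge 1$) then promotes it to a bounded $p$-adic measure; and the functional equation computed above identifies the invariant $h(k)$ as $k$. The only genuinely careful step is the generating-function manipulation in the distribution relation, where one must keep track of the factor $p^{k-1}$ through the substitution $t\mapsto pt$; the rest is routine.
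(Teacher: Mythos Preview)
Your proof is correct and follows essentially the same route as the paper: both establish the distribution relation via the generating-function identity and the finite geometric sum (the paper proves it for an arbitrary modulus $M$ rather than just $p$, but this is inessential), then invoke Theorem~\ref{generalmeasure} for boundedness. The paper dispatches the functional equation $y\beta_k(x+1,y)-\beta_k(x,y)=kx^{k-1}$ by citing direct computation or \cite{apostol}, whereas you supply the generating-function argument explicitly, but the underlying method is identical.
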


\begin{proof}
By Lemma \ref{fdecomposition} and the notes after it, in order to prove that $\mu_{k,z}$ is a $p$-adic distribution it is enough to show the equality
\[
\beta_k(a,y) = p^{k-1} \sum_{b=0}^{p-1} y^b \beta_k((a+b)/p, y^p).
\]
Indeed we shall prove that for any positive integer $M$ the following equality holds;
\[
\beta_k(a,y) = M^{k-1} \sum_{b=0}^{M-1} y^b \beta_k((a+b)/M, y^M).
\]
Now by definition of the twisted Bernoulli numbers we have

\begin{eqnarray}
\sum_{k=0}^{\infty} \sum_{b=0}^{M-1} \beta_k((a+b)/M,y^M) y^b \dfrac{t^k}{k!} &=& \sum_{b=0}^{M-1} \dfrac{ty^b e^{t(a+b)/M }}{y^M e^t - 1} = \dfrac{t e^{ta/M}}{y^M e^t - 1} \sum_{b=0}^{M-1} (y e^{t/M})^b \nonumber \\ 
&=& \dfrac{t e^{ta/M}}{y^M e^t - 1} \dfrac{y^M e^t - 1}{y e^{t/M} - 1} = M \dfrac{(t/M) e^{a(t/M)}}{y e^{t/M} - 1} \nonumber \\
&=& M \sum_{k=0}^{\infty} \beta_k(a,y) \dfrac{t^k}{M^k k!}. \nonumber
\end{eqnarray} 
By equating the coefficients of $t^k$ we obtain 
\[
\beta_k(a,y) = M^{k-1} \sum_{b=0}^{M-1} y^b \beta_k((a+b)/M, y^M)
\]
which implies that $\mu$ extends to a $p$-adic distribution. Since $\beta_k(x,y)$ satisfy the hypothesis of Theorem \ref{generalmeasure}, $\mu$ indeed extends to a $p$-adic measure. 

By direct computation we see that $y \beta_k(x+1,y) - \beta(x,y) = k x^{k-1}$, or we may refer to \cite{apostol}. 
\end{proof}

\begin{defn}
Let $z \in \mathbb{C}_p$ such that $|z-1|_p \geq 1$ and $k \in \mathbb{Z}_{\geq 1}$. We define the $k$-th twisted Bernoulli measure on $\mathbb{Z}_p$ as 
\[
\mu_{\beta, k,z} (a + (p^N) ) = p^{N(k-1)} z^a \beta_k(a/p^N, z^{p^N})
\] 
on the subsets of the form $a+(p^N)$ and extend it to all compact open subsets of $\mathbb{Z}_p$ linearly. 
\end{defn} 

Now we have a concrete example of a sequence of $p$-adic distributions $\mu_{k,z}$ of the form given in Theorem \ref{prep}. By the uniqueness of such measures (up to a  multiple), many of the results of this section may be generalized to other measures in this form. As we will see, working with $\beta_k$ instead of an arbitrary $f_k$ may be easier to derive some results. So in this section we will only work with twisted Bernoulli measures. Accordingly to simplify the notation we may denote them by $\mu_{k,z}$. Also we may use $\mu_k$, $\mu_z$ or only $\mu$ if there is no confusion. 

\begin{rmk}
\normalfont{
In the notation of Section \ref{general}, we have that 
\[
\beta_k(x,y) = f_k(x,y) = \sum_{i=1}^{k} f_{k,i}(y) x^{k-i}  \implies f_{k,i} = \binom{k}{i} \beta_i(y).\,\,\,
\]}
\end{rmk}

\begin{ex}\normalfont{
We compute $\mu_{z,1}$ and $\mu_{z,2}$. Since in general
\[
\beta_n(a,y) = \sum_{k=0}^{n} \binom nk\beta_k(y) a^{n-k}
\]
 we have that $\beta_1(a,y) = \beta_1(y) = 1 / (y - 1)$. So 
\[
\mu_{z,1} (a + (p^N)) = z^a / (z^{p^N} - 1)
\]
which is the same measure up to a minus sign defined by Koblitz in \cite{koblitz_book1}. 

Similarly for $n=2$ we have
\[
\beta_2(a,y) = 2\beta_1(y) a + \beta_2(y) = \dfrac{2a}{y - 1} + \dfrac{-2 y}{(y - 1)^2},
\]
and so 
\[
\mu_{z,2}(a + (p^N)) = z^a \left(\dfrac{a}{z^{p^N} - 1} + \dfrac{-  p^{N} z^{p^N}}{(z^{p^N} - 1)^2} \right).
\]}
\end{ex}

We need a final identity about twisted Bernoulli polynomials for later use. 

\begin{lemma}
\label{symmetry}
For any $k \geq 0$, $a$ and $y\neq 0,1$, the following equality holds;
\[
(-1)^k \beta_k(a,1/y) = y \beta_k(1-a,y).
\]
\end{lemma}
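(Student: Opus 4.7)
The natural approach is to compare generating functions. By the defining generating function
\[
\sum_{k=0}^{\infty} \beta_k(x,y) \frac{t^k}{k!} = \frac{t e^{xt}}{y e^t - 1},
\]
it suffices to prove that the exponential generating functions of the sequences $(-1)^k \beta_k(a,1/y)$ and $y \beta_k(1-a, y)$ in the variable $k$ coincide.

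For the left-hand side, I would substitute $-t$ for $t$ and $1/y$ for $y$ in the generating function, giving
\[
\sum_{k=0}^{\infty} (-1)^k \beta_k(a, 1/y) \frac{t^k}{k!} = \frac{-t e^{-at}}{(1/y) e^{-t} - 1}.
\]
Then I would clear denominators by multiplying the numerator and denominator by $-y e^{t}$, which turns the right-hand side into $\dfrac{y t e^{(1-a)t}}{y e^{t} - 1}$.

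For the right-hand side, a direct substitution of $1-a$ in place of $x$ in the generating function yields
\[
\sum_{k=0}^{\infty} y \beta_k(1-a, y) \frac{t^k}{k!} = \frac{y t e^{(1-a)t}}{y e^t - 1},
\]
which is exactly the expression obtained above. Equating coefficients of $t^k/k!$ yields the claimed identity.

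I do not expect a genuine obstacle; the only small bookkeeping point is the sign manipulation when replacing $t$ by $-t$ and $y$ by $1/y$, where one has to be careful that $-t \cdot (-1) = t$ and that multiplying numerator and denominator by $-y e^{t}$ simultaneously produces the factor $y$ and the shift $e^{(1-a)t}$. Since both sides are formal power series in $t$ with rational coefficients in $y$ (for $y \neq 0, 1$), the equality of generating functions gives the equality coefficient by coefficient, which is the desired identity for every $k \geq 0$.
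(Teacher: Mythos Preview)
Your proposal is correct and matches the paper's own proof essentially line for line: both substitute $-t$ for $t$ and $1/y$ for $y$ in the defining generating function, then clear the denominator to recognize the result as $y$ times the generating function for $\beta_k(1-a,y)$, and conclude by equating coefficients.
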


\begin{proof}
By definition we have that
\[
\sum_{k=0}^{\infty} \beta_k(a,1/y) (-1)^k \dfrac{t^k}{k!} = \dfrac{(-t)e^{a(-t)}}{(1/y) e^{-t} - 1}.
\]
We may manipulate the right hand side as
\[
\dfrac{(-t)e^{a(-t)}}{(1/y) e^{-t} - 1} = \dfrac{-y t e^{-at}}{e^{-t}(1-y e^t)} = \dfrac{y t e^{(1-a)t}}{y e^t - 1} = y \sum_{k=0}^{\infty} \beta_k(1-a, y) \dfrac{t^k}{k!}. 
\]
Equating the coefficients of $t^k$ in both expansions we obtain 
\[
(-1)^k \beta_k(a,1/y) = y \beta_k(1-a,k)
\]
as desired.
\end{proof}

Now we will prove some results about $p$-adic integration using twisted Bernoulli measures. First we adapt the notation of Section \ref{general} to twisted Bernoulli measures. The following corollary follows by definition and Propositions \ref{overZ_p} and \ref{indexshift}.
\begin{cor}
Let $z\in C_p$ with $|1-z|_p \geq 1$. Let $\mu_{\beta,k,z} = \mu_{k,z}$ be as above. Then
\begin{itemize}
\item[i)] $\displaystyle  k \int\limits_{\mathbb{Z}_p} x^{k-1} d\mu_{1,z} = \int\limits_{\mathbb{Z}_p} 1 d\mu_{k,z} = \beta_k(z)$
\item[ii)] $\displaystyle k \int\limits_{\mathbb{Z}^{*}_p} x^{k-1} d\mu_{1,z} =  \int\limits_{\mathbb{Z}^{*}_p} 1 d\mu_{k,z} = \beta_k(z) - p^{k-1} \beta_k(z^p)$
\item[iii)] $\displaystyle \int\limits_{\mathbb{Z}_p} x^r d\mu_{k,z} = k \int\limits_{\mathbb{Z}_p} x^{r+k-1} d\mu_{1,z}$  \normalfont{for} $r \geq 0$
\item[iv)]$\displaystyle \int\limits_{\mathbb{Z}_p^{*}} x^r d\mu_{k} = k \int\limits_{\mathbb{Z}_p^{*}} x^{r+k-1} d\mu_{1}$.
\end{itemize}
\end{cor}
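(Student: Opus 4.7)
The plan is to derive this corollary as a direct specialization of Propositions \ref{overZ_p} and \ref{indexshift} to the case $f_k(x,y) = \beta_k(x,y)$. The preceding theorem establishes that twisted Bernoulli polynomials satisfy the hypotheses of Theorem \ref{prep} with normalization constant $h(k) = k$, so all previously derived formulas apply and the ratio $h(k)/h(1)$ that appeared abstractly is simply $k$.

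For (i) I would first invoke Proposition \ref{overZ_p}(i), which gives $\int_{\mathbb{Z}_p} 1\, d\mu_{k,z} = f_k(0,z) = \beta_k(0,z) = \beta_k(z)$. Then Proposition \ref{indexshift}(i) at $r=0$ yields $\int_{\mathbb{Z}_p} 1\, d\mu_k = (h(k)/h(1)) \int_{\mathbb{Z}_p} x^{k-1} d\mu_1 = k \int_{\mathbb{Z}_p} x^{k-1} d\mu_{1,z}$, completing both equalities. For (ii) I would use Proposition \ref{overZ_p}(ii) in the same way to identify $\int_{\mathbb{Z}_p^*} 1\, d\mu_{k,z}$ with $\beta_k(z) - p^{k-1}\beta_k(z^p)$, and Proposition \ref{indexshift}(ii) at $r=0$ to get the index-shift equality on $\mathbb{Z}_p^*$.

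For (iii) and (iv) I would simply apply Proposition \ref{indexshift}(i) and (ii) respectively for general $r$, substituting $h(k)/h(1) = k/1 = k$. Nothing new has to be computed: the work was already done in the previous section, and the only substantive input from this section is the identification of $h(k)$ with $k$ for the twisted Bernoulli family.

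There is essentially no obstacle here; the corollary is a bookkeeping step. The only minor care needed is to make sure one cites the right part of each proposition and to record explicitly that $f_k = \beta_k$ forces $f_k(0,z) = \beta_k(z)$, so that the abstract formulas from Section \ref{general} translate into the stated closed forms involving the twisted Bernoulli numbers.
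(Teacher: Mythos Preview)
Your proposal is correct and matches the paper's own justification exactly: the corollary is stated to follow ``by definition and Propositions \ref{overZ_p} and \ref{indexshift},'' which is precisely the specialization $f_k=\beta_k$, $h(k)=k$ that you carry out.
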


The following theorem relates the twisted Bernoulli measures $\mu_{k,z}$ and $\mu_{k,1/z}$. It was first proved in \cite{coleman}. Here we give a different proof. 

\begin{thm}
Let $z \in \mathbb{C}_p - \{0\}$ with $|z-1|_p \geq 1$. Then 
\begin{align*}
& \int\limits_{\mathbb{Z}_p} 1 d\mu_{k,1/z} = (-1)^k \int\limits_{\mathbb{Z}_p} 1 d\mu_{k,z},  k \geq 2 \nonumber \\ 
& \int\limits_{\mathbb{Z}_p} 1 d\mu_{1,1/z} = - 1 - \int\limits_{\mathbb{Z}_p} 1 d\mu_{1,z} 
\end{align*}
\end{thm}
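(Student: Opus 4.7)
The plan is to reduce both identities to the corresponding functional equations for $\beta_k(y)$ itself, and then apply Lemma \ref{symmetry} together with the defining identity $y \beta_k(x+1, y) - \beta_k(x, y) = k x^{k-1}$ at the specific point $x = 0$.

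First I would invoke Proposition \ref{overZ_p}(i), which gives
\[
\int_{\mathbb{Z}_p} 1 \, d\mu_{k,w} = \beta_k(0, w) = \beta_k(w)
\]
for any admissible $w$. So the two claims are equivalent to
\[
\beta_k(1/z) = (-1)^k \beta_k(z) \quad (k \geq 2), \qquad \beta_1(1/z) = -1 - \beta_1(z).
\]

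Next, I would specialize Lemma \ref{symmetry} to $a = 0$ and $y = z$, obtaining
\[
(-1)^k \beta_k(1/z) = z \beta_k(1, z).
\]
This reduces everything to evaluating $z \beta_k(1, z)$. For that I would use the relation $y \beta_k(x+1, y) - \beta_k(x, y) = k x^{k-1}$ (the content of Theorem \ref{prep} applied to twisted Bernoulli polynomials, with $h(k) = k$) at $x = 0$. Since $0^{k-1}$ equals $0$ for $k \geq 2$ and equals $1$ for $k = 1$, this yields
\[
z \beta_k(1, z) = \beta_k(z) \;\; (k \geq 2), \qquad z \beta_1(1, z) = 1 + \beta_1(z).
\]

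Combining the two previous displays gives, for $k \geq 2$, $(-1)^k \beta_k(1/z) = \beta_k(z)$, which upon multiplying by $(-1)^k$ gives the desired $\beta_k(1/z) = (-1)^k \beta_k(z)$. For $k = 1$, the combination gives $-\beta_1(1/z) = 1 + \beta_1(z)$, i.e.\ $\beta_1(1/z) = -1 - \beta_1(z)$. Translating back through Proposition \ref{overZ_p}(i) finishes the proof. There is no real obstacle here; the only care needed is in tracking the $k=1$ boundary case where $0^{k-1}$ contributes a nonzero constant, which is exactly the source of the extra $-1$ in the second identity.
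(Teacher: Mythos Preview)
Your proof is correct. It uses the same two key ingredients as the paper --- Lemma \ref{symmetry} and the functional equation $y\beta_k(x+1,y)-\beta_k(x,y)=kx^{k-1}$ at $x=0$ --- but applies them in a more economical way. The paper works directly with the Riemann sums defining $\int_{\mathbb{Z}_p}1\,d\mu_{k,1/z}$, applies Lemma \ref{symmetry} term by term, re-indexes via $b=p^N-a$, and is left with boundary terms $z^{p^N}\beta_k(1,z^{p^N})-\beta_k(0,z^{p^N})$ which are then handled by the functional equation. You instead invoke Proposition \ref{overZ_p}(i) at the outset to reduce the statement to the pointwise identity $\beta_k(1/z)=(-1)^k\beta_k(z)$ (with the $k=1$ correction), and then prove that identity by specializing Lemma \ref{symmetry} at $a=0$ and the functional equation at $x=0$. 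Your route bypasses the Riemann-sum bookkeeping entirely; the paper's route is more self-contained in that it does not rely on Proposition \ref{overZ_p}, but since that proposition is already available there is no reason not to use it.
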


\begin{proof}
We just use the definition of $p$-adic integration and Lemma \ref{symmetry} 
\begin{eqnarray}
\int\limits_{\mathbb{Z}_p} 1 d\mu_{k,1/z} &=& \lim_{N \rightarrow \infty} \sum_{a=0}^{p^N - 1} p^{N(k-1)} z^{-a} \beta_k(a/p^N, 1/z^{p^N}) \nonumber \\ &=& \lim_{N \rightarrow \infty} (-1)^k \sum_{a=0}^{p^N - 1} p^{N(k-1)} z^{-a} z^{p^N} \beta_k(1 - a/p^N, z^{p^N}) \nonumber \\
&=& (-1)^k \lim_{N \rightarrow \infty} \sum_{a=0}^{p^N - 1} p^{N(k-1)} z^{p^N - a} \beta_k((p^N - a) / p^N , z^{p^N}) \nonumber \\
&=& (-1)^k \lim_{N \rightarrow \infty} \sum_{b=1}^{p^N} p^{N(k-1)} z^b \beta_k(b/p^N, z^{p^N}) \nonumber \\
&=& (-1)^k \lim_{N \rightarrow \infty} p^{N(k-1)} \left[\sum_{b=0}^{p^{N-1}} z^b \beta_k(b/p^N, z^{p^N}) + z^{p^N} \beta_k(1, z^{p^N}) - \beta_k(0, z^{p^N})\right] \nonumber
\end{eqnarray}
Now for $k \geq 2$ we have $z^{p^N} \beta_k(1, z^{p^N}) = \beta_k(0,z^{p^N})$ and so  
\[
\int\limits_{\mathbb{Z}_p} 1 d\mu_{k,1/z} = (-1)^k \int\limits_{\mathbb{Z}_p} 1 d\mu_{k,z}.
\]
Similarly $z^{p^N} \beta_1(1, z^{p^N}) = 1 + \beta_1(0,z^{p^N})$ which implies that
\[
\int\limits_{\mathbb{Z}_p} 1 d\mu_{1,1/z} = - 1 - \int\limits_{\mathbb{Z}_p} 1 d\mu_{1,z}
\]
as desired. 
\end{proof}

Since $ \int\limits_{\mathbb{Z}^{*}_p} 1 d\mu_{k,z} = \int\limits_{\mathbb{Z}_p} 1 d\mu_{k,z} -  p^{k-1} \int\limits_{\mathbb{Z}_p} 1 d\mu_{k,z^p}$ the following result directly follows. 

\begin{cor}
Let $z \in C_p - \{0\}$ with $|z-1|_p \geq 1$. Then for any $k \geq 1$,
\[
\int\limits_{\mathbb{Z}^{*}_p} 1 d\mu_{k,1/z} = (-1)^k \int\limits_{\mathbb{Z}^{*}_p} 1 d\mu_{k,z}
\]
\end{cor}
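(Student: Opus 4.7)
The plan is to reduce the corollary to the previous theorem via the identity
\[
\int\limits_{\mathbb{Z}^{*}_p} 1 \, d\mu_{k,w} \;=\; \int\limits_{\mathbb{Z}_p} 1 \, d\mu_{k,w} \;-\; p^{k-1} \int\limits_{\mathbb{Z}_p} 1 \, d\mu_{k,w^p}
\]
noted in the text just before the statement, and then handle the cases $k \ge 2$ and $k = 1$ separately. For $k \ge 2$ I would apply this identity with $w = 1/z$ (noting that $1/z^p = (1/z)^p$), substitute the theorem's clean formula $\int_{\mathbb{Z}_p} 1 \, d\mu_{k,1/w} = (-1)^k \int_{\mathbb{Z}_p} 1 \, d\mu_{k,w}$ to both pieces, and factor out $(-1)^k$ to recover $\int_{\mathbb{Z}^{*}_p} 1 \, d\mu_{k,z}$.

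The case $k = 1$ is where a small bookkeeping check is needed, since for $k=1$ the theorem instead gives $\int_{\mathbb{Z}_p} 1 \, d\mu_{1,1/w} = -1 - \int_{\mathbb{Z}_p} 1 \, d\mu_{1,w}$. Using the decomposition above (with $p^{k-1} = 1$) yields
\[
\int\limits_{\mathbb{Z}^{*}_p} 1 \, d\mu_{1,1/z} \;=\; \bigl(-1 - \textstyle\int_{\mathbb{Z}_p} 1 \, d\mu_{1,z}\bigr) - \bigl(-1 - \textstyle\int_{\mathbb{Z}_p} 1 \, d\mu_{1,z^p}\bigr),
\]
so the two constant $-1$ contributions cancel, leaving precisely $-\int_{\mathbb{Z}^{*}_p} 1 \, d\mu_{1,z} = (-1)^1 \int_{\mathbb{Z}^{*}_p} 1 \, d\mu_{1,z}$. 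This makes the corollary uniform in $k \ge 1$.

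There is essentially no obstacle: the work was already done inside the previous theorem (and the auxiliary Lemma \ref{symmetry}), and restricting from $\mathbb{Z}_p$ to $\mathbb{Z}_p^{*}$ via the standard complement formula is purely formal. The only point worth flagging in the write-up is that the cancellation of the $-1$ terms when $k=1$ is the reason the previous theorem's two-case statement collapses into a single case on $\mathbb{Z}_p^{*}$; I would remark on this briefly so the reader sees the uniformity is not accidental.
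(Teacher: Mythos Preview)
Your proposal is correct and follows exactly the paper's approach: the paper simply states that the corollary ``directly follows'' from the decomposition $\int_{\mathbb{Z}_p^{*}} 1\, d\mu_{k,z} = \int_{\mathbb{Z}_p} 1\, d\mu_{k,z} - p^{k-1}\int_{\mathbb{Z}_p} 1\, d\mu_{k,z^p}$ combined with the previous theorem. Your write-up is in fact more careful than the paper's, since you make explicit the cancellation of the two $-1$ terms in the $k=1$ case that the paper leaves to the reader.
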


To finalize we give the relation between Bernoulli measures and twisted Bernoulli measures which gives an alternative proof of the fact that the regularized Bernoulli distributions are bounded and so extend to $p$-adic measures. 

\begin{thm}
Let $c \in \mathbb{Z}_{\geq 1}$ with $p \nmid c$ and let $\mu_c$ denote the $c$-th root of unities. Then 
\[
c^{-k} \sum_{\zeta \in \mu_c - \{1\}} \mu_{k,z}(a + (p^N)) = - \mu_{B, k, c^{-1}}(a+(p^N)).
\]
Here for $\alpha \in {\mathbb{Z}_p^{*}}$, $\mu_{B,k,\alpha}$ is the Bernoulli measure defined as \\
$\mu_{B, k, \alpha} (a+(p^N)) = \mu_{B,k} \left(a + (p^N)\right) - {\alpha}^{-k} \mu_{B,k}(\{a \alpha\}_N + (p^N))$ where $\{a \alpha\}_N$ denotes the residue class of $a \alpha$ mod $(p^N)$. 
\end{thm}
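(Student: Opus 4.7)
The plan is to evaluate the left-hand side by reducing twisted Bernoulli polynomials at $c$-th roots of unity to classical Bernoulli polynomials, then match with the right-hand side. The crucial input is the identity
\[
\beta_k(a, \zeta) = c^{k-1} \sum_{b=0}^{c-1} \zeta^b B_k\!\left(\frac{a+b}{c}\right), \qquad \zeta \in \mu_c - \{1\},
\]
which is the distribution relation for $\beta_k$ specialised at a root of unity, using the convention $\beta_k(x, 1) = B_k(x)$. The fastest derivation is a direct generating-function computation analogous to the one carried out earlier in the paper: starting from $\sum_{b=0}^{c-1} \zeta^b \, te^{(a+b)t/c}/(e^t - 1)$, summing the geometric series $\sum_{b} (\zeta e^{t/c})^b = (\zeta^c e^t - 1)/(\zeta e^{t/c} - 1)$, and using $\zeta^c = 1$ to cancel the factor $e^t - 1$, the expression collapses to $te^{at/c}/(\zeta e^{t/c} - 1)$; reading off coefficients of $t^k$ yields the identity.

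Next, because $\gcd(p, c) = 1$, the map $\zeta \mapsto \zeta^{p^N}$ is a bijection of $\mu_c$ fixing $1$. I would choose an integer $r$ with $r p^N \equiv 1 \pmod{c}$ and reindex by $w = \zeta^{p^N}$, so that $\zeta^a = w^{ar}$. Inserting the identity above (with $a$ replaced by $a/p^N$) and swapping the order of summation rewrites
\[
\sum_{\zeta \in \mu_c - \{1\}} \zeta^a \beta_k(a/p^N, \zeta^{p^N}) = c^{k-1} \sum_{b=0}^{c-1} B_k\!\left(\frac{a + b p^N}{c p^N}\right) \sum_{w \in \mu_c - \{1\}} w^{ar + b},
\]
and the inner sum equals $c - 1$ if $c$ divides $ar + b$ and $-1$ otherwise.

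Two independent simplifications then finish the proof. The $-1$ contribution, paired with the classical Bernoulli distribution relation $\sum_{b=0}^{c-1} B_k((x+b)/c) = c^{1-k} B_k(x)$ evaluated at $x = a/p^N$, collapses to $-B_k(a/p^N)$. The remaining main term isolates the unique $b_0 \in \{0, 1, \dots, c-1\}$ with $b_0 \equiv -ar \pmod{c}$; a short check shows that $m := (a + b_0 p^N)/c$ is an integer in $[0, p^N)$ satisfying $c m \equiv a \pmod{p^N}$, hence $m = \{a c^{-1}\}_N$, and the term becomes $c^k B_k(\{a c^{-1}\}_N/p^N)$. Multiplying the previous display by $p^{N(k-1)}$ then rewrites it as
\[
\sum_{\zeta \in \mu_c - \{1\}} \mu_{k,\zeta}(a + (p^N)) = c^k \mu_{B,k}(\{a c^{-1}\}_N + (p^N)) - \mu_{B,k}(a + (p^N)),
\]
which is precisely $-\mu_{B,k,c^{-1}}(a+(p^N))$ by the stated definition, since $\alpha^{-k} = c^k$ for $\alpha = c^{-1}$.

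The main obstacle is the first step. Lemma \ref{fdecomposition} and its converse only address the distribution relation for $f_k$ when $y^M \neq 1$, so the specialisation at $c$-th roots of unity (where $y^c = 1$) is not automatic; one must run the generating-function computation directly, with the convention $\beta_k(x, 1) = B_k(x)$ playing a decisive role. Once that identity is secured, the remainder of the argument is elementary bookkeeping with root-of-unity orthogonality and the identification $m = \{a c^{-1}\}_N$.
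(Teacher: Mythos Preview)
Your argument is correct and arrives at the same endpoint as the paper, but by a genuinely different route.

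The paper works with the full sum over $\mu_c$, including $\zeta=1$ via the convention $\beta_k(x,1)=B_k(x)$: it forms the single generating function
\[
\sum_{\zeta\in\mu_c}\zeta^a\,\frac{te^{at/p^N}}{\zeta^{p^N}e^t-1},
\]
expands each summand as a formal geometric series in $\zeta^{p^N}e^t$, and uses orthogonality on all of $\mu_c$ to collapse the double sum to a single geometric progression. Resumming yields $\dfrac{ct}{e^{ct}-1}\,e^{ct(a+p^Na_N)/(cp^N)}$ for a specific $a_N\in\{0,\dots,c-1\}$; reading off the coefficient of $t^k$ gives $c^kB_k\bigl((a+p^Na_N)/(cp^N)\bigr)$, and subtracting the $\zeta=1$ contribution $B_k(a/p^N)$ finishes after multiplying by $p^{N(k-1)}$.

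You instead first isolate, for each fixed $\zeta\ne 1$, the multiplication identity
\[
\beta_k(a,\zeta)=c^{k-1}\sum_{b=0}^{c-1}\zeta^{\,b}B_k\!\Bigl(\frac{a+b}{c}\Bigr),
\]
and only afterwards sum over $\zeta\in\mu_c\setminus\{1\}$, splitting the resulting character sum into its $-1$ and $c$ pieces. This is a cleaner modular decomposition: the identity above has independent interest (it is exactly the mixed distribution relation linking twisted and ordinary Bernoulli polynomials), and you avoid the formal geometric-series step, at the price of also invoking the classical relation $\sum_b B_k((x+b)/c)=c^{1-k}B_k(x)$ to handle the $-1$ part. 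The reindexing by $w=\zeta^{p^N}$ is harmless but unnecessary, since $\sum_{\zeta\ne1}\zeta^{a+bp^N}$ can be evaluated directly.

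One remark on the final step: both your computation and the paper's own proof actually establish
\[
\sum_{\zeta\in\mu_c\setminus\{1\}}\mu_{k,\zeta}\bigl(a+(p^N)\bigr)=c^k\mu_{B,k}\bigl(\{ac^{-1}\}_N+(p^N)\bigr)-\mu_{B,k}\bigl(a+(p^N)\bigr)=-\mu_{B,k,c^{-1}}\bigl(a+(p^N)\bigr),
\]
with no extra factor $c^{-k}$; the paper's closing sentence ``multiplying both sides by $c^{-k}$'' does not in fact match the definition of $\mu_{B,k,c^{-1}}$ given in the statement. Your integer $m=(a+b_0p^N)/c$ coincides with the paper's $(a+p^Na_N)/c$, since your $b_0$ and the paper's $a_N$ are the same residue.
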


\begin{proof}
We set $\beta_k(x,1) = B_k(x)$. Now
\begin{align*}
\sum_{k=0}^{\infty} \sum_{\zeta \in \mu_c} {\zeta}^a \beta_k(a/p^N, {\zeta}^{p^N}) \dfrac{t^k}{k!} &= \sum_{\zeta \in \mu_c} \dfrac{t e^{at/p^N} {\zeta}^a}{{\zeta}^{p^N}e^t -1} = -t e^{at/p^N} \sum_{\zeta \in \mu_c} \dfrac{{\zeta}^a}{1 - {\zeta}^{p^N}e^t} \\ 
&= -t e^{at/p^N} \sum_{\zeta \in \mu_c} {\zeta}^a \sum_{l=0}^{\infty} {({\zeta}^{p^N}e^t)}^l = -t \sum_{l=0}^{\infty} e^{t(a+l p^N)/p^N} \sum_{\zeta \in \mu_c} {\zeta}^{a+l p^N} \\ 
&= -ct  \sum_{l=0, c \mid (a+lp^N)}^{\infty}  e^{t(a+l p^N)/p^N}
\end{align*}
Now we manipulate the sum $\sum_{l=0, c \mid (a+lp^N)}^{\infty}  e^{t(a+l p^N)/p^N}$. We have
\begin{align*}
c \mid (a+lp^N), \,\, l=0,1,2... \iff l = a_N + sc,\,\, s=0,1,2,...
\end{align*}
where $a_N$ is the unique integer with  $0 \leq a_N \leq c-1$ and $p^{N} a_N \equiv -a$ (mod $c$). Then we have
\[
\sum_{l=0, c \mid (a+lp^N)}^{\infty}  e^{t(a+l p^N)/p^N} = e^{t (a + p^N a_N)/p^N} \sum_{s=0}^{\infty} e^{(ct)s}
\]
which implies that 
\begin{align*}
\sum_{k=0}^{\infty} \sum_{\zeta \in \mu_c} {\zeta}^a \beta_k(a/p^N, {\zeta}^{p^N}) \dfrac{t^k}{k!} &= -ct e^{t (a + p^N a_N)/p^N} \sum_{s=0}^{\infty} e^{(ct)s} \\
&= \dfrac{ct}{e^{ct}-1} e^{ct(a+p^N a_N )/(cp^N)}  \\ 
&= \sum_{k=0}^{\infty} B_k\left(\dfrac{a+p^N a_N}{c p^N}\right) \dfrac{c^k t^k}{k !}.
\end{align*}

Taking out the coefficient of $t^k/k!$ and multiplying it by $p^{N(k-1)}$ we obtain 
\begin{align*}
\sum_{\zeta \in \mu_c - \{1\}} \mu_{k,z}(a + (p^N)) & = c^k \mu_{B,k} \left(\dfrac{a+p^N a_N}{c} + (p^N)\right) - \mu_{B,k}(a + (p^N)) \\
& = c^k \mu_{B,k} \left(\dfrac{a}{c} + (p^N)\right) - \mu_{B,k}(a + (p^N)).
\end{align*}
The last equality follows from the fact that $p^N a_N / c \in (p^N)$. Multiplying both sides by $c^{-k}$ we obtain the desired equality. 
\end{proof}

\newpage


\begin{thebibliography}{9}
\bibitem{Mazur} B. Mazur, {\em Analyse p-adique}, special Bourbaki redaction
\bibitem{Koblitz_book} N. Koblitz, {\em p-adic Numbers, p-adic Analysis and Zeta-Functions}, Springer-Verlag New York, 1984.
\bibitem{koblitz_book1} N. Koblitz, {\em p-adic Analysis: a Short Course on Recent Work}, London Mathematical Society Lecture Note, 46 (1980), Cambridge University Press, Cambridge, New York 
\bibitem{coleman} R. F. Coleman, {\em Dilogarithms, Regulators and p-adic L-functions}, Invent. Math., 69 (1982), pp. 171 - 208
\bibitem{apostol} T. M. Apostol, {\em On the Lerch Zeta Function}, Pacific J. Math., 1 (1951), pp. 161 - 167
\bibitem{Leo_Kub} T. Kubota, H. W. Leopold, {\em Eine p-adische Theorie der Zetawerte.}, I. Journ. Reine Angew. Math., 214/215 (1964), pp. 328 - 339
\end{thebibliography}
\end{document}